\title{Constant rank operators in Korn-Maxwell-Sobolev inequalities}
\author[P. Lewintan]{Peter Lewintan}
\address[Peter Lewintan]{Institute for Analysis, Karlsruhe Institute of Technology, Englerstr. 2, 76131 Karlsruhe, Germany}
\email{peter.lewintan@kit.edu}
\author[P. Stephan]{Paul Stephan}
\address[Paul Stephan]{Department of Mathematics and Statistics, University of Konstanz, Universit\"{a}tsstrasse 10, 78464 Konstanz, Germany.}
\email{paul.stephan@uni-konstanz.de}
\newcommand{\B}{\mathbb{B}}
\newcommand{\A}{\mathscr{A}}
\numberwithin{equation}{section}
\newcommand{\bbone}{\text{\usefont{U}{bbold}{m}{n}1}}
\newcommand{\R}{\mathbb R}
\newcommand{\N}{\mathbb N}
\newcommand{\D}{\mathrm{D}}
\DeclareMathOperator{\Curl}{Curl}
\DeclareMathOperator{\Div}{Div}
\newcommand{\dev}{\operatorname{dev}}
\newcommand{\sym}{\operatorname{sym}}
\DeclareMathOperator{\tr}{tr}
\newcommand{\lebe}{\operatorname{L}}
\newcommand{\sobo}{\operatorname{W}}
\newcommand{\hold}{\operatorname{C}}
\newcommand{\norm}[1]{\left\lVert#1\right\rVert}
\newcommand{\abs}[1]{\lvert#1\rvert}
\theoremstyle{plain}
\newtheorem{theorem}{Theorem}[section]
\newtheorem{lemma}[theorem]{Lemma}
\newtheorem{corollary}[theorem]{Corollary}
\theoremstyle{remark}
\newtheorem{remark}[theorem]{Remark}
\newtheorem{example}[theorem]{Example}
\begin{document}

        \begin{abstract}
		We focus on Korn-Maxwell-Sobolev inequalities for operators of reduced constant rank. These inequalities take the form  
		\[
		\norm{P - \Pi_{\B} \Pi_{\ker\A} P}_{\dot{\sobo}^{k-1, p^*}(\R^n)} \le c \, (\norm{\A[P]}_{\dot{\sobo}^{k-1, p^*}(\R^n)} + \norm{\B P}_{\lebe^p(\R^n)})
		\]  
		for all \( P \in \hold_c^\infty(\R^n; V) \), where \( V \) is a finite-dimensional vector space, \( \A \) is a linear mapping, and \( \mathbb{B} \) is a constant coefficient homogeneous differential operator of order \( k \). In particular, we can treat the combination $(p,\A,\B,k)=(1,\tr,\Curl,1)$. Our results generalize the techniques from \cite{GLN2, GLVS}, which exclusively dealt with reduced elliptic operators. In contrast to the reduced ellipticity case, however, the reduced constant rank case necessitates to introduce a correction, namely the projection $\Pi_\mathbb{B}$
		on the left-hand side of the inequality.
	\end{abstract}

	% \begin{abstract} In the recent contributions \cite{GLN2, GLVS} the complete classification of the operators that might appear in Korn-Maxwell-Sobolev inequalities was given. In particular (reduced) ellipticity should be present. In this short note, we focus on a more general class of differential operators and show how these inequalities have to be corrected in the absence of (reduced) ellipticity. This is the case, e.g., for the right-hand side of the following form:
		% 	\begin{align*}
			% 		\norm{\tr P}_{\lebe^{\frac{n}{n-1}}(\R^n)} + \norm{\Curl P}_{\lebe^1(\R^n)}  \qquad \text{with } P\in\hold^\infty_c(\R^n;\R^{n\times n}).
			% 	\end{align*}
		% This example is discussed in detail below.
		% \end{abstract}
	\date\today
	\keywords{Korn's inequality, Sobolev inequalities, incompatible tensor fields, limiting $\lebe^{1}$-estimates}
	\subjclass[2020]{35A23, 26D10, 35Q74/35Q75, 46E35}
	\maketitle

	\section{Introduction}
	Korn's inequalities provide the coercivity of the bilinear forms appearing in linear elasticity or fluid mechanics and in their most prominent version it reads (for $n\ge2$):
	\begin{align}\label{eq:Korn0}
		\norm{\D u}_{\lebe^2(\R^n)}\le c\,\norm{\sym \D u}_{\lebe^2´(\R^n)} \quad\forall\ u\in\hold^\infty_c(\R^n;\R^n).
	\end{align}
	For further relaxation of the models, one might ask for a better understanding of the possible part maps that may appear on the right-hand side of such inequalities. Indeed, a complete classification of admissible linear maps $\A:\R^{n\times n}\to V$ follows from the fundamental paper by Calderón and Zygmund \cite{CZ} for $1<q<\infty$:
	\begin{align*}
		\norm{\D u}_{\lebe^q(\R^n)}\le c\,\norm{\A[\D u]}_{\lebe^q(\R^n)} \quad\Leftrightarrow \quad \A[\D\cdot] \text{ elliptic},
	\end{align*}
	where a $k$-th order linear homogeneous constant coefficients differential operator $$\mathbb{A}=\sum_{\abs{\alpha}=k}\mathbb{A}_\alpha\partial^\alpha,$$ with linear maps $\mathbb{A}_\alpha$ between two finite dimensional spaces $E$ and $F$, is called \emph{elliptic} if and only if the corresponding symbol maps $$\mathbb{A}[\xi]=\sum_{\abs{\alpha}=k}\mathbb{A}_\alpha\xi^\alpha: E\to F$$ are all injective for all $\xi\in\R^n\backslash\{0\}$. Note that the classical Korn's inequalities do not hold in the boundary case $q=1$ due to Ornstein's non-inequality, cf. \cite{Conti_New, Diening_Sharp_2024, Ornstein}.
	\medskip
	
	Here, we focus on an extension in a different direction, e.g. in the strain gradient theory, a macroscopic theory of plasticity, the displacement gradient is assumed to decompose into an elastic part $e$ and a plastic part $P$, both non-symmetric and incompatible. Then dislocations (e.g. coming from crystal defects) are modeled by controlling the curl of the plastic distortion which also enters the energy. To prove coercivity in case of incompatible fields (i.e. not gradients), Korn's inequality has to observe the absence of curl-freeness and, indeed, it holds, cf. \cite{ContiGarroni,GLP,Neffunifying,NeffPlastic,NeffPaulyWitsch}:
	\begin{align}\label{eq:KMS0}
		\norm{P}_{\lebe^q(\R^n)}\le c\,(\norm{\sym P}_{\lebe^q(\R^n)}+\norm{\Curl P}_{\lebe^p(\R^n)}) \quad \forall P\in\hold^\infty_c(\R^n;\R^{n\times n}),
	\end{align}
	where $\Curl$ denotes the matrix curl, i.e. the row-wise application of the vector curl, and,  in terms of scaling the natural relation between the integrabilities $q$ and $p$ is $q=p^*=\frac{np}{n-p}$ being the Sobolev exponent of $p\in[1,n)$. Testing \eqref{eq:KMS0} with gradients, we recover \eqref{eq:Korn0}. Thus, the reduction to the classical Korn's inequality, the presence of the Curl and the Sobolev conjugate exponent motivates the terminology of Korn-Maxwell-Sobolev introduced in \cite{GmSp} for such type of inequalities. In the most general case, such inequalities read as
	\begin{align}\label{eq:KMS-I}
		\norm{P}_{\lebe^{q}(\R^n)}\le c\left(\norm{\A[P]}_{\lebe^{q}(\R^n)}+\norm{\B P}_{\lebe^p(\R^n)}\right), \qquad  P\in\hold^\infty_c(\R^n;V), \tag{KMS-I}
	\end{align}
	where $V,\widetilde V, W$ are finite dimensional vector spaces, $\A: V\to\widetilde V$ is a linear map and $\B$ is a linear homogeneous first order differential operator on $\R^n$ from $V$ to $W$ with constant coefficients. In the sequel, we shall refer to \eqref{eq:KMS-I} as \emph{Korn-Maxwell-Sobolev inequalities of first kind}.  Variants of such inequalities have already been used to obtain error estimates in mixed finite elements methods, cf. \cite{Arnold}, where the particular combination $\A=\dev$ and $\B=\Div$ was considered. Moreover, \eqref{eq:KMS-I} with $\A=\sym$ and $\B=\Curl$ provide the coercivity of the appearing bilinear forms in extended continuum models, like the relaxed micromorphic model \cite{Neffunifying, Neff}. There exists a vast literature on various constellations in the case $p>1$, see e.g. the references in \cite{BauerNeffPaulyStarke,GLN2,LMN}, and a complete characterization of the interplay between the part map $\A$ and the differential operator $\B$ in Korn-Maxwell-Sobolev inequalities has been given in \cite{GLN2}. On the contrary, only few results, namely \cite{ContiGarroni,GLP,GLN1,GmSp}, address the limiting case $p=1$ and only in the particular situation of the differential operator $\B=\Curl$ being the matrix curl. This gap has been closed in the recent paper \cite{GLVS} where again a complete classification was obtained. The results in \cite{GLN2,GLVS} are optimal in the following sense:
	\begin{itemize}
		\item if $1<p<n$ then:
		\begin{align*}
			\eqref{eq:KMS-I} \quad \xLeftrightarrow{\text{\cite{GLN2}}} \quad \ker\A\cap\bigcup_{\xi\in\R^n\backslash\{0\}}\ker\B[\xi]=\{0\},
		\end{align*}
		\item for $p=1$:
		\begin{align*}
			\eqref{eq:KMS-I} \quad \xLeftrightarrow{\text{\cite{GLVS}}} \quad  \left(\ \ker\A\cap\bigcup_{\xi\in\R^n\backslash\{0\}}\ker\B[\xi]=\{0\} \quad \wedge \ \bigcap_{\xi\in\R^n\backslash\{0\}}\B[\xi](\ker\A)=\{0\}\right) .
		\end{align*}
	\end{itemize}
	Note that the first algebraic condition expresses the injectivity of the restricted symbol maps $\B[\xi]:\ker\A\to W$, and the additional condition in the borderline case $p=1$ comes from the extra condition of \textit{cancellation} in limiting Sobolev inequalities, cf. \cite{VS}, where this new terminology was introduced. Recall, that a differential operator $\mathbb{A}$ as above is called \textit{cancelling} if and only if the intersection of the images of all symbol maps is trivial:
	\begin{align*}
		\bigcap_{\xi\in\R^n\backslash\{0\}}\operatorname{im}\mathbb{A}[\xi]=\bigcap_{\xi\in\R^n\backslash\{0\}}\mathbb{A}[\xi](E)=\{0\}.
	\end{align*}
	A strengthened condition is the one of $\mathbb{C}$-ellipticity, and we refer to \cite{GmRaVa1} for the  connections of the single notions for differential operators. Given the above classifications, for validity of \eqref{eq:KMS-I} it only remains to check the mentioned algebraic conditions. Indeed, the combinations $(\A,\B)=(\dev,\Div)$ or $(\A,\B)=(\sym,\Curl)$ are admissible in \eqref{eq:KMS-I}, cf. \cite{GLN2,GLVS}, whereas with the constellation $(\A,\B)=(\tr,\Curl)$ the algebraic condition of reduced ellipticity is violated.
	
	\section{Constant rank operators}
	The motivation of the present note is to understand in which sense the latter combination $(\A,\B)=(\tr,\Curl)$ can appear on the right-hand side of \eqref{eq:KMS-I}, see Example \ref{ex:trCurl} below. This brings us to more general differential operators, namely \textit{constant rank} operators. A differential operator $\mathbb{A}$ as above is said to have a \textit{constant rank} if and only if there exists $r\in\N_0$ such that
	\begin{align*}
		\operatorname{rank} \mathbb{A}[\xi]=r \qquad \forall\ \xi\in\R^n\backslash\{0\}.
	\end{align*}
	In particular, all elliptic operators have constant rank, but also the curl and the divergence (see \cite[Rem. 3.3]{FM}). It is precisely this condition of constant rank that was used in \cite{SW} to prove coercivity estimates for non-elliptic systems, and it was used in the context of compensated compactness by Murat \cite{Murat}. However, we know from the Calderón-Zygmund theory that ellipticity is both necessary and sufficient in coercivity inequalities, so that for constant rank operators we need a correction that captures non-ellipticity. This is observed by the map $\Pi_{\mathbb{A}}$ given by
	\begin{align*}
		\Pi_{\mathbb{A}}u\coloneqq\mathscr{F}^{-1}\left(\Pi_{\ker\mathbb{A}[\xi]}[\mathscr{F}(u(x))]\right) \qquad \text{for } u\in\hold^\infty_c(\R^n;E),
	\end{align*}
	where $\mathscr{F}$ denotes the Fourier transform and $\Pi_{\ker\mathbb{A}[\xi]}$ the projection on $\ker\mathbb{A}[\xi]$. Then constant rank operators are characterized by the following lemma:
	\begin{lemma}[\cite{FM,GR,SW}]\label{lem:classCR}
		Let $n\ge2$, $1<q<\infty$, $k\in\N$, $E$ and $F$ be finite dimensional spaces and $\mathbb{A}$ be a $k$-th order linear homogeneous constant coefficients differential operator on $\R^n$ from $E$ to $F$. Then $\mathbb{A}$ has constant rank if and only if there exists a constant $c=c(q,\mathbb{A})>0$ such that
		\begin{align*}
			\norm{u-\Pi_{\mathbb{A}}u}_{\lebe^q(\R^n)}\le c\, \norm{\mathbb{A}u}_{\dot{\sobo}^{-k,q}(\R^n)} \quad \forall\ u\in\hold^\infty_c(\R^n;E).
		\end{align*}
	\end{lemma}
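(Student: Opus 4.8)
The plan is to prove the characterization via Fourier multiplier theory, treating the two implications separately. For the direction ``constant rank $\Rightarrow$ inequality'', I would work on the Fourier side. Writing $\widehat{u}(\xi)$ for the Fourier transform, note that $\mathbb{A}u$ has symbol $\mathbb{A}[\xi]\widehat{u}(\xi)$ (up to a factor $(2\pi i)^k$), so the claim reduces to showing that the operator sending $\mathbb{A}u$ to $u - \Pi_{\mathbb{A}}u$ is bounded from $\lebe^q$ to $\lebe^q$ after accounting for the $\dot{\sobo}^{-k,q}$-norm on the right. Concretely, one checks that
\begin{align*}
(u - \Pi_{\mathbb{A}}u)^{\widehat{\ }}(\xi) = \Pi_{(\ker\mathbb{A}[\xi])^\perp}\widehat{u}(\xi) = M(\xi)\,\bigl(\abs{\xi}^{-k}\mathbb{A}[\xi]\widehat{u}(\xi)\bigr),
\end{align*}
where $M(\xi) = \abs{\xi}^{k}\,\mathbb{A}[\xi]^{\dagger}$ with $\mathbb{A}[\xi]^\dagger$ the Moore--Penrose pseudoinverse, so that $\mathbb{A}[\xi]^\dagger \mathbb{A}[\xi] = \Pi_{(\ker\mathbb{A}[\xi])^\perp}$. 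The symbol $M$ is $0$-homogeneous by construction, and the key point is that under the constant rank hypothesis the pseudoinverse $\xi \mapsto \mathbb{A}[\xi]^\dagger$ is smooth on $\R^n\setminus\{0\}$ (the rank being locally constant, hence $\mathbb{A}[\xi]^\dagger$ depends smoothly on $\mathbb{A}[\xi]$); therefore $M$ satisfies the Mikhlin--Hörmander bounds and defines a bounded $\lebe^q$-multiplier for $1<q<\infty$. Composing with the standard fact that $\abs{\xi}^{-k}\mathbb{A}[\xi]\widehat{u}$ is the Fourier transform of an element whose $\lebe^q$-norm is comparable to $\norm{\mathbb{A}u}_{\dot{\sobo}^{-k,q}}$ gives the estimate.

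For the converse direction, ``inequality $\Rightarrow$ constant rank'', I would argue by contradiction. Suppose $\mathbb{A}$ does not have constant rank; then there exist $\xi_0\in\R^n\setminus\{0\}$ and a nearby $\xi_1$ with $\operatorname{rank}\mathbb{A}[\xi_1] > \operatorname{rank}\mathbb{A}[\xi_0]$, i.e.\ the function $\xi\mapsto\dim\ker\mathbb{A}[\xi]$ jumps. One picks a vector $v_0\in\ker\mathbb{A}[\xi_0]$ that is \emph{not} in the limit of $\ker\mathbb{A}[\xi]$ as $\xi\to\xi_0$ along a suitable direction, or more precisely exploits the discontinuity of $\xi\mapsto\Pi_{\ker\mathbb{A}[\xi]}$ at $\xi_0$. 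Then one constructs a sequence of test functions $u_j\in\hold^\infty_c(\R^n;E)$ that concentrate in frequency near $\xi_0$ (a standard ``wave packet'' or modulated bump construction, $u_j(x) = e^{i j\xi_0\cdot x}\varphi(x/R_j)\,v_0$ with parameters tuned so the frequency support shrinks around $\xi_0$), for which $\norm{u_j - \Pi_{\mathbb{A}}u_j}_{\lebe^q}$ stays bounded below while $\norm{\mathbb{A}u_j}_{\dot{\sobo}^{-k,q}}\to 0$, contradicting the inequality. This is essentially the argument of Fonseca--Müller \cite{FM} for why constant rank is necessary for the associated projection/regularity statements.

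The main obstacle I anticipate is the smoothness and homogeneity of the pseudoinverse symbol and verifying the Mikhlin condition uniformly: one must show that constant rank genuinely forces $\xi\mapsto\mathbb{A}[\xi]^\dagger$ to be $\hold^\infty$ (not merely continuous) on the sphere and then control all its derivatives — this is where the rank hypothesis is used in an essential way, since the pseudoinverse map is only smooth on the stratum of matrices of a fixed rank. A clean way to handle the Mikhlin bounds is to cover the unit sphere by finitely many charts, on each of which one has a smooth choice of bases adapting to $\ker\mathbb{A}[\xi]$ and $\operatorname{im}\mathbb{A}[\xi]$ (possible by constant rank and a local Gram--Schmidt / implicit function argument), reducing $\mathbb{A}[\xi]^\dagger$ to inverting a smoothly-varying square block; homogeneity then extends these bounds from the sphere to all of $\R^n\setminus\{0\}$. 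Since this lemma is quoted from \cite{FM,GR,SW}, I would present the above as the structure of the proof and refer to those sources for the technical multiplier estimates rather than reproving Mikhlin--Hörmander.
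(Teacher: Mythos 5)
The paper does not prove this lemma; it is stated as a known result with the explicit remark (immediately after the statement) that sufficiency is contained in \cite{SW,FM} and necessity was shown in \cite{GR}. Your reconstruction follows the same two-pronged strategy as those sources, and the forward direction is correctly set up: $(u-\Pi_{\mathbb{A}}u)^{\wedge}(\xi)=\mathbb{A}[\xi]^{\dagger}\mathbb{A}[\xi]\,\widehat u(\xi)$, so the estimate reduces to showing that $M(\xi)=\abs{\xi}^{k}\mathbb{A}[\xi]^{\dagger}$ is a Mikhlin--H\"ormander multiplier; $0$-homogeneity is automatic, and the constant rank hypothesis is exactly what gives smoothness of $\xi\mapsto\mathbb{A}[\xi]^{\dagger}$ away from the origin, after which the Mikhlin bounds follow from compactness of the sphere. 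For necessity you correctly sketch the contradiction: locate a $\xi_0$ where the rank drops, choose $v_0\in\ker\mathbb{A}[\xi_0]$ outside the Grassmannian limit of $\ker\mathbb{A}[\xi]$ as $\xi\to\xi_0$, and concentrate the Fourier support of test functions near $\xi_0$. Two remarks. First, a misattribution: you credit the necessity argument to Fonseca--M\"uller, but \cite{FM} (like \cite{SW}) only proves sufficiency; the necessity is due to Guerra and Rai\c{t}\u{a} \cite{GR}, exactly as the paper notes. Second, the wave-packet construction needs more than the sketch suggests: one must quantify that $\abs{\Pi_{(\ker\mathbb{A}[\xi])^{\perp}}v_0}$ stays uniformly bounded below for almost every $\xi$ in shrinking neighborhoods of $\xi_0$, while simultaneously tuning the parameters so that $\norm{\mathbb{A}u_j}_{\dot\sobo^{-k,q}(\R^n)}\to 0$ at the right rate relative to $\norm{u_j-\Pi_{\mathbb{A}}u_j}_{\lebe^q(\R^n)}$; balancing these competing rates is where \cite{GR} does its actual work and cannot simply be asserted. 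Since the paper itself defers to the literature for this lemma, closing the argument by reference to \cite{FM,GR,SW}, as you do, is the appropriate move, but the attribution and the quantitative core of the converse should be stated correctly.
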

	The sufficiency of the constant rank condition in Lemma \ref{lem:classCR} is contained in \cite{SW,FM} and the necessity was shown in \cite{GR}.
	\begin{example} \label{ex:curl}
		For $E=\R^n$ and $\mathbb{A}=\operatorname{curl}$, the projection $\Pi_{\mathbb{A}}$ maps to the $\operatorname{curl}$-free part from the Helmholtz decomposition of $u$:
		\begin{align*}
			\Pi_{\operatorname{curl}} u (x)= u_{\operatorname{curl}}(x)=\frac{1}{n\omega_n}\int_{\R^n}\frac{x-y}{\abs{x-y}^n}\operatorname{div} u(y)\,\mathrm{d} y.
		\end{align*}
	\end{example}
	
	\begin{remark} Since for constant rank operators it holds
		\begin{align}\label{eq:projellipt}
			\Pi_{\mathbb{A}}= 0 \quad \Leftrightarrow \quad \mathbb{A} \text{ is elliptic},
		\end{align}
		we directly recover from Lemma \ref{lem:classCR} Calderón-Zygmund's classification of differential operators that might appear in classical Korn inequalities of the first type \cite{CZ}:
	\end{remark}
	\begin{corollary}
		Let $n\ge2$, $1<q<\infty$, $k\in\N$, $E$ and $F$ be finite dimensional spaces and $\mathbb{A}$ be a $k$-th order linear homogeneous constant coefficients differential operator on $\R^n$ from $E$ to $F$. Then $\mathbb{A}$ is elliptic if and only if there exists a constant $c=c(q,\mathbb{A})>0$ such that
		\begin{align*}
			\norm{u}_{\lebe^q(\R^n)}\le c\, \norm{\mathbb{A}u}_{\dot{\sobo}^{-k,q}(\R^n)} \quad \forall\ u\in\hold^\infty_c(\R^n;E).
		\end{align*}
	\end{corollary}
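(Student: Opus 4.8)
The plan is to derive this corollary directly from Lemma~\ref{lem:classCR} together with the equivalence \eqref{eq:projellipt}. First I would prove the forward implication: assume $\mathbb{A}$ is elliptic. Then $\mathbb{A}$ has constant rank (the rank of every symbol map is $\dim E$), so Lemma~\ref{lem:classCR} applies and yields a constant $c$ with $\norm{u-\Pi_{\mathbb{A}}u}_{\lebe^q(\R^n)}\le c\,\norm{\mathbb{A}u}_{\dot{\sobo}^{-k,q}(\R^n)}$ for all $u\in\hold^\infty_c(\R^n;E)$. By \eqref{eq:projellipt}, ellipticity forces $\Pi_{\mathbb{A}}=0$, so the left-hand side collapses to $\norm{u}_{\lebe^q(\R^n)}$, which is exactly the claimed estimate.

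For the converse, I would argue contrapositively. Suppose $\mathbb{A}$ is not elliptic. If $\mathbb{A}$ does not have constant rank, then no estimate of the form $\norm{u-\Pi_{\mathbb{A}}u}_{\lebe^q}\le c\,\norm{\mathbb{A}u}_{\dot{\sobo}^{-k,q}}$ holds by the necessity direction of Lemma~\ref{lem:classCR}; a fortiori, since $\norm{u}_{\lebe^q}\ge\norm{u-\Pi_{\mathbb{A}}u}_{\lebe^q}$ fails to be controlled — here one must be a bit careful, so instead I would directly test the claimed inequality. The cleanest route is: the claimed inequality $\norm{u}_{\lebe^q}\le c\,\norm{\mathbb{A}u}_{\dot{\sobo}^{-k,q}}$ forces $\mathbb{A}u=0\Rightarrow u=0$ on $\hold^\infty_c(\R^n;E)$, and more quantitatively, localizing/rescaling Fourier multiplier test functions concentrated near a direction $\xi_0$ with $\ker\mathbb{A}[\xi_0]\ne\{0\}$ produces $u$ with $\norm{\mathbb{A}u}_{\dot{\sobo}^{-k,q}}$ arbitrarily small relative to $\norm{u}_{\lebe^q}$, contradicting the inequality. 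Alternatively, and more economically, if $\mathbb{A}$ fails to be elliptic but does have constant rank $r<\dim E$, then $\Pi_{\mathbb{A}}\ne0$ by \eqref{eq:projellipt}, and one picks $u$ so that $\Pi_{\mathbb{A}}u=u$, i.e. $u$ has Fourier transform supported where it lies in $\ker\mathbb{A}[\xi]$; then $\mathbb{A}u=0$ while $u\ne0$, immediately breaking the inequality. Combining with the non-constant-rank case (handled by the necessity part of Lemma~\ref{lem:classCR} applied to the weaker-looking but in fact equivalent statement, since $\Pi_{\mathbb{A}}u$ can be absorbed) completes the contrapositive.

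The one genuine subtlety — and the step I expect to require the most care — is the converse in the case where $\mathbb{A}$ has constant rank $r$ with $0<r<\dim E$: one must exhibit a nonzero admissible test function $u\in\hold^\infty_c(\R^n;E)$ for which $\mathbb{A}u\equiv0$, or at least a sequence violating the inequality. Since $\ker\mathbb{A}[\xi]$ is a nontrivial subbundle of $\R^n\setminus\{0\}$ by the constant rank hypothesis, one can build a smooth compactly supported $E$-valued function whose Fourier transform is a smooth section of this kernel bundle (multiplied by a bump in $\xi$), and then $\mathbb{A}u=\mathscr{F}^{-1}(\mathbb{A}[\xi]\widehat u(\xi))=0$ while $u\not\equiv0$; strictly speaking $u$ need not be compactly supported in $x$, so one approximates by truncation and controls the error, using that $\norm{\mathbb{A}u}_{\dot{\sobo}^{-k,q}}$ depends continuously on $u$ in an appropriate topology. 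This makes the right-hand side vanish in the limit while the left-hand side stays bounded away from zero, contradicting any finite $c$. With this obstruction dispatched, the corollary follows, and the remark preceding it — that one recovers the Calderón–Zygmund classification — is then immediate.
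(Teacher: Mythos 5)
Your forward implication is exactly the paper's intended argument: ellipticity gives constant rank, Lemma~\ref{lem:classCR} gives the estimate with $\Pi_{\mathbb{A}}$, and \eqref{eq:projellipt} lets you drop $\Pi_{\mathbb{A}}$. The paper presents the corollary as ``directly recovered'' from Lemma~\ref{lem:classCR} and \eqref{eq:projellipt}, attributing the statement itself to Calder\'{o}n--Zygmund \cite{CZ}, and does not spell out the converse at all. What you add, then, is a genuine proof of necessity, and your main route -- oscillating test functions concentrated near a direction $\xi_0$ with $\ker\mathbb{A}[\xi_0]\ne\{0\}$, driving $\norm{\mathbb{A}u}_{\dot{\sobo}^{-k,q}}$ to zero while $\norm{u}_{\lebe^q}$ stays bounded -- is the standard Calder\'{o}n--Zygmund-type argument and works uniformly, without any case distinction on whether $\mathbb{A}$ has constant rank. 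Your ``economical'' alternative (pick $u$ with $\widehat{u}(\xi)\in\ker\mathbb{A}[\xi]$, then truncate) also goes through: the needed continuity is supplied by the Mihlin--H\"{o}rmander bound $\norm{\mathbb{A}v}_{\dot{\sobo}^{-k,q}}\lesssim\norm{v}_{\lebe^q}$, which holds for \emph{any} homogeneous $k$-th order $\mathbb{A}$ because $|\xi|^{-k}\mathbb{A}[\xi]$ is smooth and $0$-homogeneous away from the origin, so $\norm{\mathbb{A}u_R}_{\dot{\sobo}^{-k,q}}=\norm{\mathbb{A}(u_R-u)}_{\dot{\sobo}^{-k,q}}\lesssim\norm{u_R-u}_{\lebe^q}\to0$.

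One piece of your write-up is shaky and worth flagging. In your closing parenthetical you suggest the non-constant-rank case can be ``handled by the necessity part of Lemma~\ref{lem:classCR} \ldots since $\Pi_{\mathbb{A}}u$ can be absorbed.'' That absorption would require $\norm{\Pi_{\mathbb{A}}u}_{\lebe^q}\lesssim\norm{u}_{\lebe^q}$, i.e.\ $\lebe^q$-boundedness of the multiplier $\xi\mapsto\Pi_{\ker\mathbb{A}[\xi]}$. But precisely when $\mathbb{A}$ fails to have constant rank this multiplier is discontinuous on the sphere (the projection jumps where the rank does), so it is not a Calder\'{o}n--Zygmund multiplier and there is no a priori $\lebe^q$ bound. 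You already sensed this when you wrote ``here one must be a bit careful''; the clean resolution is to drop the case split entirely and let the oscillating-test-function argument do all the work, which it does. With that caveat removed, your proof is correct and, for the converse direction, more explicit than what the paper records.
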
\medskip
	
	Now, we are prepared to catch up with the statement of our main theorem:
	
	\begin{theorem}\label{thm:main}
		Let $n\ge2$, $k\in\N$, $V$, $\widetilde V$ and $W$ be finite dimensional spaces, $\A: V\to\widetilde V$ be linear and $\B$ be a $k$-th order linear homogeneous constant coefficients differential operator on $\R^n$ from $V$ to $W$.
		\begin{enumerate}[i)]
			\item If $\B$ has a \emph{reduced constant rank relative to $\A$}, meaning that \label{item:p>1}
			\begin{align*}
				\B[\xi]\big|_{\ker\A} : \ker\A\to W \quad  \text{has constant rank for all $\xi\in\R^n\backslash\{0\}$} 
			\end{align*}
			then for all $1<p<n$ there exists a constant $c=c(\A,\B,p)>0$ such that
			\begin{align*}
				\norm{P-\Pi_{\B}\Pi_{\ker\A}P}_{\dot{\sobo}^{k-1,p^*}(\R^n)} \le c\, (\norm{\A[P]}_{\dot{\sobo}^{k-1,p^*}(\R^n)}+\norm{\B P}_{\lebe^p(\R^n)}).
			\end{align*}
			\item If \ $\B$ has a \emph{reduced constant rank relative to $\A$} and $\B$ is \emph{reduced cancelling relative to $\A$}, meaning that \label{item:p=1}
			\begin{align*}
				\B[\xi]\big|_{\ker\A} : \ker\A\to W \  \text{has constant rank for all $\xi\in\R^n\backslash\{0\}$} \end{align*} and \begin{align*} \ \bigcap_{\xi\in\R^n\backslash\{0\}}\B[\xi](\ker\A)=\{0\}
			\end{align*}
			then there exists a constant $c=c(\A,\B)>0$ such that
			\begin{align*}
				\norm{P-\Pi_{\B}\Pi_{\ker\A}P}_{\dot{\sobo}^{k-1,1^*}(\R^n)} \le c\, (\norm{\A[P]}_{\dot{\sobo}^{k-1,1^*}(\R^n)}+\norm{\B P}_{\lebe^1(\R^n)}).
			\end{align*}
		\end{enumerate}
	\end{theorem}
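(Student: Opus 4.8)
The plan is to reduce both items to the constant-rank estimate of Lemma \ref{lem:classCR} applied to the restricted operator, combined with a fractional-integration (Sobolev embedding) step to pass from the negative Sobolev norm on the right-hand side to an $\lebe^p$-norm, and with the cancellation refinement of Van Schaftingen in the case $p=1$. First I would introduce the auxiliary field $Q \coloneqq \Pi_{\ker\A} P$, the Fourier projection of $P$ onto $\ker\A[\xi] = \ker\A$ pointwise in frequency (here $\A$ has order $0$, so the kernel is constant in $\xi$). The key algebraic observations are: (a) $P - \Pi_{\ker\A}P$ has Fourier transform supported on the orthogonal complement of $\ker\A$, on which $\A$ is injective, so $\norm{P - Q}_{\dot\sobo^{k-1,p^*}} \le c\,\norm{\A[P]}_{\dot\sobo^{k-1,p^*}}$ by a Mikhlin–Hörmander multiplier argument (for $p>1$) resp.\ a direct $\lebe^1$–$\dot\sobo^{k-1,1^*}$ bound for the smoothing multiplier together with Sobolev embedding when $p=1$; and (b) $\B P$ and $\B Q$ differ by $\B(P-Q)$, whose Fourier transform is again supported away from $\ker\A$, so $\norm{\B(P-Q)}_{\dot\sobo^{-k,p^*}}$ — or the relevant norm — is controlled by $\norm{\A[P]}$; hence it suffices to prove the estimate with $P$ replaced by $Q$ and $\Pi_\B\Pi_{\ker\A}P$ replaced by $\Pi_\B Q$.

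For the field $Q$, which takes values in $\ker\A$, the restricted operator $\mathbb{A}_0 \coloneqq \B|_{\ker\A}\colon \ker\A \to W$ has constant rank by hypothesis, and $\Pi_{\mathbb{A}_0} = \Pi_\B$ on such fields (the projections onto $\ker\B[\xi]\cap\ker\A$ agree). Applying Lemma \ref{lem:classCR} to $\mathbb{A}_0$ gives
\begin{align*}
\norm{Q - \Pi_\B Q}_{\lebe^{p^*}(\R^n)} \le c\,\norm{\B Q}_{\dot\sobo^{-k,p^*}(\R^n)}.
\end{align*}
It then remains to upgrade the right-hand side: I would show $\norm{\B Q}_{\dot\sobo^{-k,p^*}(\R^n)} \le c\,\norm{\B Q}_{\lebe^p(\R^n)}$ via the Sobolev embedding $\lebe^p \hookrightarrow \dot\sobo^{-k, p^*}$ (equivalently, $I_k\colon \lebe^p \to \lebe^{p^*}$ bounded for $1<p<n$ by the Hardy–Littlewood–Sobolev inequality), and similarly bootstrap the left-hand side from $\lebe^{p^*}$ to $\dot\sobo^{k-1,p^*}$ by noting that the highest-order behaviour of $Q - \Pi_\B Q$ is again governed by $\B Q$ through the constant-rank inverse, so $k-1$ derivatives cost exactly the drop from $\lebe^{p^*}$-control of $Q-\Pi_\B Q$ against $\dot\sobo^{-k}$ down to $\dot\sobo^{-1}$; concretely one runs Lemma \ref{lem:classCR}'s multiplier estimate with $k-1$ extra derivatives inserted, which is licit since the symbol $\B[\xi]^\dagger$ (pseudo-inverse on the complement of the kernel) is $(-k)$-homogeneous and smooth on the sphere.

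For part \ref{item:p=1} the Hardy–Littlewood–Sobolev step fails at $p=1$, and this is the main obstacle. Here I would instead invoke the limiting Sobolev inequality of Van Schaftingen \cite{VS}: for a homogeneous constant-coefficient operator $\mathbb{L}$ of order $k$ that is \emph{cancelling}, one has $\norm{v}_{\dot\sobo^{k-1,1^*}(\R^n)} \le c\,\norm{\mathbb{L}v}_{\lebe^1(\R^n)}$ for all $v$ with $\mathbb{L}v \in \lebe^1$ and appropriate decay. The correct way to apply this is to the field $v \coloneqq Q - \Pi_\B Q$, observing that $\mathbb{A}_0 v = \B Q$ (since $\mathbb{A}_0 \Pi_\B Q = 0$) and that $\mathbb{A}_0$ restricted to the complement of its kernel — i.e.\ acting on fields of the form $Q - \Pi_\B Q$ — is \emph{elliptic}, while the reduced-cancelling hypothesis $\bigcap_\xi \B[\xi](\ker\A) = \{0\}$ is exactly the statement that $\mathbb{A}_0$ is cancelling; an elliptic and cancelling operator satisfies the limiting estimate. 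Thus $\norm{Q - \Pi_\B Q}_{\dot\sobo^{k-1,1^*}} \le c\,\norm{\B Q}_{\lebe^1}$, and combining with the reduction from $P$ to $Q$ of the first paragraph (whose multiplier bounds must now be taken with care at the $\lebe^1$ endpoint — using that $\A[P]\mapsto P-Q$ is a smoothing-of-order-zero, in fact zeroth-order bounded, operator on the relevant homogeneous spaces, and controlling $\B(P-Q)$ in $\lebe^1$ by $\A[P]$ directly since $P - Q$ and all its derivatives are given by zeroth-order Fourier multipliers applied to $\A[P]$) finishes the proof. The delicate point throughout is keeping every frequency-cutoff multiplier acting only on the non-degenerate (complement-of-kernel) component, where all symbols are smooth and homogeneous of the right degree, so that no $\lebe^1$-unbounded operator is ever applied to a genuinely $\lebe^1$ quantity.
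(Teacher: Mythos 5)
Your decomposition $P=Q+\Pi_{(\ker\A)^\perp}P$ with $Q=\Pi_{\ker\A}P$, the pointwise control of $\Pi_{(\ker\A)^\perp}P$ by $\A[P]$, and the application of Lemma~\ref{lem:classCR} to the restricted operator $\mathbb{A}_0=\B|_{\ker\A}$ with $k-1$ derivatives inserted are all in line with the paper's route to part~\ref{item:p>1}, so that part is essentially sound. (A small slip: $\lebe^p\hookrightarrow\dot\sobo^{-k,p^*}$ and the boundedness of $I_k\colon\lebe^p\to\lebe^{p^*}$ are false for $k>1$; only the $\lebe^p\hookrightarrow\dot\sobo^{-1,p^*}$ embedding holds, which is why one must first lift by $k-1$ derivatives as your later ``bootstrap'' remark correctly does.)

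The genuine gap is in part~\ref{item:p=1}. You want $\norm{Q-\Pi_\B Q}_{\dot\sobo^{k-1,1^*}}\le c\norm{\mathbb{A}_0 Q}_{\lebe^1}$ and justify it by asserting that $\mathbb{A}_0$ ``restricted to the complement of its kernel'' is elliptic and cancelling, so Van Schaftingen's limiting estimate applies. That does not work: $\mathbb{A}_0$ is only of constant rank, and the subspace $(\ker\mathbb{A}_0[\xi])^\perp$ rotates with $\xi$, so there is no fixed subspace of $\ker\A$ on which $\mathbb{A}_0$ restricts to a genuine constant-coefficient elliptic operator. Van Schaftingen's \cite[Thm~1.4]{VS} requires ellipticity of the operator as given, and citing it here leaves a hole that is precisely the heart of the matter. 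The paper instead stays in negative Sobolev norms throughout: by Rai\c{t}\u{a}'s result that constant-rank operators sit in an ellipticity complex \cite[Thm~1]{Raita}, there exists an $\ell$-th order operator $\mathbb{L}$ with $\ker\mathbb{L}[\xi]=\B[\xi](\ker\A)$; the reduced cancellation of $\B$ relative to $\A$ is exactly the cocancellation of $\mathbb{L}$; and one then applies Van Schaftingen's strong Bourgain--Brezis inequality \cite[Thm~9.2]{VS} to $f=\B P$ to obtain $\norm{\B P}_{\dot\sobo^{-1,1^*}}\le c(\norm{\mathbb{L}\B P}_{\dot\sobo^{-1-\ell,1^*}}+\norm{\B P}_{\lebe^1})$, where the first term is controlled because $\mathbb{L}\B\Pi_{\ker\A}P=0$ and the remainder involves only $\Pi_{(\ker\A)^\perp}P$, handled by \eqref{eq:goodguy}. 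A related defect in your write-up: you try to bound $\norm{\B(P-Q)}_{\lebe^1}$ by $\A[P]$, but $P-Q$ is a pointwise linear map of $\A[P]$ and hence $\B(P-Q)$ involves $k$ full derivatives of $\A[P]$ in $\lebe^1$, which is \emph{not} controlled by $\norm{\A[P]}_{\dot\sobo^{k-1,1^*}}$; the paper avoids this entirely because all $\B$-terms are always estimated in $\dot\sobo^{-k,q}$ or $\dot\sobo^{-1,1^*}$, where the $k$-th order of $\B$ is absorbed by the negative regularity index.
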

	\begin{example}\label{ex:trCurl}
		$\B=\Curl$ is (globally) cancelling and has (globally) constant rank, thus, $\A$ can be any linear map. For $\A=\tr$ we have $\Pi_{\ker\A}=\dev$ denoting the deviatoric (i.e., trace-free) part, i.e. $\dev P = P-\frac{\tr P}{n}\bbone_n$, so that in view of Example \ref{ex:curl} we have
		\begin{align*}
			\Pi_{\B}\Pi_{\ker\A}P(x)=\frac{1}{n\omega_n}\int_{\R^n}\frac{x-y}{\abs{x-y}^n}\otimes\Div\dev P(y)\,\mathrm{d} y,
		\end{align*}
		where $\Div$ is the matrix divergence, i.e. the row-wise application of the vector divergence, and we obtain:
		\begin{align*}
			&\left(\int_{\R^n}\left\lvert P(x)-\frac{1}{n\omega_n}\int_{\R^n}\frac{x-y}{\abs{x-y}^n}\otimes\Div\dev P(y)\,\mathrm{d} y \right\rvert^{\frac{n}{n-1}} \,\mathrm{d}x\right)^{\frac{n-1}{n}}\\
			&\hspace{9em}\le c\,\left(
			\left(\int_{\R^n}\left\lvert \tr P(x)\right\rvert^{\frac{n}{n-1}} \,\mathrm{d}x\right)^{\frac{n-1}{n}}+\int_{\R^n}\abs{\Curl P(x)}\,\mathrm{d} x\right).
		\end{align*}
	\end{example}
	In view of \eqref{eq:projellipt} we also recover from our main Theorem \ref{thm:main} the sufficiency of ellipticity in Korn-Maxwell-Sobolev inequalities known from \cite{GLN2} and \cite{GLVS}:
	\begin{corollary}
		Let $n\ge2$, $k\in\N$, $V$, $\widetilde V$ and $W$ be finite dimensional spaces, $\A: V\to\widetilde V$ be linear and $\B$ be a $k$-th order linear homogeneous constant coefficients differential operator on $\R^n$ from $V$ to $W$.
		\begin{enumerate}[i)]
			\item If $\B$ is \emph{reduced elliptic relative to $\A$}, meaning that
			\begin{align*}
				\B[\xi]\big|_{\ker\A} : \ker\A\to W \quad  \text{has maximal rank for all $\xi\in\R^n\backslash\{0\}$} 
			\end{align*}
			then for all $1<p<n$ there exists a constant $c=c(\A,\B,p)>0$ such that
			\begin{align*}
				\norm{P}_{\dot{\sobo}^{k-1,p^*}(\R^n)} \le c\, (\norm{\A[P]}_{\dot{\sobo}^{k-1,p^*}(\R^n)}+\norm{\B P}_{\lebe^p(\R^n)}).
			\end{align*}
			\item If \ $\B$ is \emph{reduced elliptic relative to $\A$} and $\B$ is \emph{reduced cancelling relative to $\A$}, meaning that
			\begin{align*}
				\B[\xi]\big|_{\ker\A} : \ker\A\to W \  \text{has maximal rank for all $\xi\in\R^n\backslash\{0\}$} \end{align*} and \begin{align*} \ \bigcap_{\xi\in\R^n\backslash\{0\}}\B[\xi](\ker\A)=\{0\},
			\end{align*}
			then there exists a constant $c=c(\A,\B)>0$ such that
			\begin{align*}
				\norm{P}_{\dot{\sobo}^{k-1,1^*}(\R^n)} \le c\, (\norm{\A[P]}_{\dot{\sobo}^{k-1,1^*}(\R^n)}+\norm{\B P}_{\lebe^1(\R^n)}).
			\end{align*}
		\end{enumerate}
	\end{corollary}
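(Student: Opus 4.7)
The plan is to adapt the strategy of \cite{GLN2, GLVS} for the reduced elliptic case, replacing the elliptic Sobolev (respectively Van Schaftingen) estimate for $\widetilde\B := \B|_{\ker\A}$ by its constant-rank analogue from Lemma \ref{lem:classCR}, at the cost of the projection correction $\Pi_\B$. First I would decompose $P = R + Q$ via the pointwise orthogonal projection $\Pi_{\ker\A}$, with $Q := \Pi_{\ker\A} P$ and $R := P - Q$. Since $\A|_{(\ker\A)^\perp}$ is an isomorphism onto its image, $R$ is a pointwise linear function of $\A[P]$, giving $\norm{R}_{\dot{\sobo}^{k-1, p^*}(\R^n)} \le c\,\norm{\A[P]}_{\dot{\sobo}^{k-1, p^*}(\R^n)}$. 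Writing $P - \Pi_\B \Pi_{\ker\A} P = R + (Q - \Pi_\B Q)$ and identifying $\Pi_\B$ with $\Pi_{\widetilde\B}$ on $\ker\A$-valued arguments (via the orthogonal projection formula $\Pi_{\ker\B[\xi]}v = v - \B[\xi]^*(\B[\xi]\B[\xi]^*)^\dagger \widetilde\B[\xi] v$ for $v \in \ker\A$, which makes the discrepancy a bounded order-zero Fourier multiplier under the constant-rank hypothesis on $\widetilde\B$), the remaining task is to control $\norm{Q - \Pi_{\widetilde\B} Q}_{\dot{\sobo}^{k-1, p^*}(\R^n)}$.

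For part \ref{item:p>1}, I apply Lemma \ref{lem:classCR} to $\widetilde\B$; since $\Pi_{\widetilde\B}$ is a Fourier multiplier commuting with derivatives, iterating on $(k-1)$-th order derivatives lifts the estimate to
\[
\norm{u - \Pi_{\widetilde\B} u}_{\dot{\sobo}^{k-1, q}(\R^n)} \le c\,\norm{\widetilde\B u}_{\dot{\sobo}^{-1, q}(\R^n)}, \qquad 1 < q < \infty.
\]
Apply this with $u = Q$ and $q = p^*$, and decompose $\widetilde\B Q = \B Q = \B P - \B R$ on the right. Estimate $\norm{\B P}_{\dot{\sobo}^{-1, p^*}(\R^n)} \le c\,\norm{\B P}_{\lebe^p(\R^n)}$ via the Riesz potential / dual Sobolev embedding (which holds precisely for $1 < p < n$), and $\norm{\B R}_{\dot{\sobo}^{-1, p^*}(\R^n)} \le c\,\norm{R}_{\dot{\sobo}^{k-1, p^*}(\R^n)} \le c\,\norm{\A[P]}_{\dot{\sobo}^{k-1, p^*}(\R^n)}$ since $\B$ is of order $k$. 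A triangle inequality closes \ref{item:p>1}.

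For part \ref{item:p=1} ($p = 1$), the Riesz embedding $\lebe^1 \hookrightarrow \dot{\sobo}^{-1, 1^*}(\R^n)$ fails --- dually to the failure of $\dot{\sobo}^{1, n} \hookrightarrow \lebe^\infty$ --- so the bound $\norm{\B P}_{\dot{\sobo}^{-1, 1^*}} \le c\,\norm{\B P}_{\lebe^1}$ is unavailable, and the reduced cancelling hypothesis must compensate for this endpoint failure. The tool I would invoke is a Van-Schaftingen-type endpoint analogue of Lemma \ref{lem:classCR} for the constant-rank cancelling operator $\widetilde\B$, in the stronger hybrid form
\[
\norm{u - \Pi_{\widetilde\B} u}_{\dot{\sobo}^{k-1, 1^*}(\R^n)} \le c\,\bigl(\norm{f}_{\lebe^1(\R^n)} + \norm{g}_{\dot{\sobo}^{-1, 1^*}(\R^n)}\bigr) \quad \text{whenever } \widetilde\B u = f + g,
\]
which, applied with $u = Q$, $f = \B P$, and $g = -\B R$, closes the argument exactly as in \ref{item:p>1}. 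Establishing this hybrid $\lebe^1 + \dot{\sobo}^{-1, 1^*}$ endpoint inequality --- merging Van Schaftingen's cancellation-based $\lebe^1$ technique \cite{VS} with the constant-rank Fourier-multiplier machinery underlying Lemma \ref{lem:classCR}, in such a way that $\Pi_{\widetilde\B}$ interacts correctly with both summands of the decomposition of $\widetilde\B u$ --- is the main technical obstacle of the proof.
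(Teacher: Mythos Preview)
You are not proving the statement you were given. The Corollary is the reduced \emph{elliptic} case, and in the paper its proof is one line: invoke Theorem~\ref{thm:main} and observe, via \eqref{eq:projellipt}, that reduced ellipticity of $\widetilde\B=\B|_{\ker\A}$ forces $\Pi_{\widetilde\B}=0$, so the correction $\Pi_\B\Pi_{\ker\A}P$ disappears and the plain estimate for $\|P\|$ remains. Your write-up instead carries the projection correction throughout and never discharges it, so you end with the conclusion of Theorem~\ref{thm:main}, not of the Corollary. Relatedly, your detour identifying $\Pi_\B$ with $\Pi_{\widetilde\B}$ through $(\B[\xi]\B[\xi]^*)^\dagger$ is both unnecessary and unsound as written: smoothness of that pseudo-inverse in $\xi$ requires $\B$ itself to have constant rank, which is nowhere assumed. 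In the paper there is no discrepancy to bridge --- Lemma~\ref{lem:classCR} is applied with $E=\ker\A$, so the symbol $\Pi_\B$ acting on $\ker\A$-valued maps simply \emph{is} $\Pi_{\widetilde\B}$.

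Reading your argument instead as a direct attempt at Theorem~\ref{thm:main}: part~\ref{item:p>1} is essentially the paper's proof (Lemma~\ref{lem:negSob} combined with $\lebe^p\hookrightarrow\dot\sobo^{-1,p^*}$ for $1<p<n$). Part~\ref{item:p=1} has a genuine gap: your ``hybrid $\lebe^1+\dot\sobo^{-1,1^*}$ endpoint inequality'' is exactly what is needed, but you stop at calling it ``the main technical obstacle'' without a mechanism. The idea you are missing is the annihilator construction: by Rai\c{t}\u{a}'s theorem \cite{Raita}, the constant-rank operator $\widetilde\B$ admits a homogeneous operator $\mathbb{L}$ of some order $\ell$ with $\ker\mathbb{L}[\xi]=\B[\xi](\ker\A)$; reduced cancellation then makes $\mathbb{L}$ \emph{cocancelling}, whence Van Schaftingen's strong Bourgain--Brezis estimate \cite[Thm~9.2]{VS} yields
\[
\|\B P\|_{\dot\sobo^{-1,1^*}}\le c\bigl(\|\mathbb{L}\B P\|_{\dot\sobo^{-1-\ell,1^*}}+\|\B P\|_{\lebe^1}\bigr).
\]
Since $\mathbb{L}\B$ annihilates $Q=\Pi_{\ker\A}P$ identically, the first term is $\|\mathbb{L}\B R\|_{\dot\sobo^{-1-\ell,1^*}}\le c\|\A[P]\|_{\dot\sobo^{k-1,1^*}}$, and one closes as in~\ref{item:p>1}. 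This is precisely your hybrid inequality with $f=\B P$ and $g=-\B R$, but obtained via cocancellation rather than by a direct endpoint analogue of Lemma~\ref{lem:classCR}.
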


	\section{Proof of the main theorem}
	The proof of the main Theorem \ref{thm:main} follows from the next lemma:
	\begin{lemma}\label{lem:negSob}
		Let $n\ge2$, $1< q<\infty$, $k\in\N$, $V$, $\widetilde V$ and $W$ be finite dimensional spaces, $\A: V\to\widetilde V$ be linear and $\B$ be a $k$-th order linear homogeneous constant coefficients differential operator on $\R^n$ from $V$ to $W$. Then the following are equivalent:
		\begin{enumerate}[(a)]
			\item There exists a constant $c=c(\A,\B,q)>0$ such that \label{item:auxineq}
			\begin{align}\label{eq:wishLq}
				\norm{P-\Pi_{\B}\Pi_{\ker\A}P}_{\lebe^q(\R^n)} \le c\, (\norm{\A[P]}_{\lebe^q(\R^n)}+\norm{\B P}_{\dot{\sobo}^{-k,q}(\R^n)}), 
			\end{align}
			\item $\B$ has a \emph{reduced constant rank relative to $\A$}, meaning that \label{item:condition}
			\begin{align*}
				\B[\xi]\big|_{\ker\A} : \ker\A\to W \quad  \text{has constant rank for all $\xi\in\R^n\backslash\{0\}$}.
			\end{align*}
		\end{enumerate}
	\end{lemma}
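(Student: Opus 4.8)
The plan is to reduce everything to the frequency side and then invoke the already-established constant-rank characterization of Lemma~\ref{lem:classCR}. First I would decompose an arbitrary $P\in\hold_c^\infty(\R^n;V)$ as $P = \Pi_{\ker\A}P + (P - \Pi_{\ker\A}P)$. The second summand has Fourier transform lying in $(\ker\A)^\perp$ at each frequency, hence $\A[\,\cdot\,]$ is bounded below on it by a fixed constant (a pure linear-algebra fact, uniform in $\xi$ since $\A$ has no symbol-dependence), so $\norm{P-\Pi_{\ker\A}P}_{\lebe^q}\le c\,\norm{\A[P]}_{\lebe^q}$ by the Mikhlin multiplier theorem (here $1<q<\infty$ is exactly what we need). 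It therefore remains to control $\norm{\Pi_{\ker\A}P - \Pi_{\B}\Pi_{\ker\A}P}_{\lebe^q}$ by the right-hand side. Writing $Q\coloneqq\Pi_{\ker\A}P$, this is $\norm{Q - \Pi_{\B}Q}_{\lebe^q}$, and since $\widehat Q(\xi)\in\ker\A$ for every $\xi$, the operator $\B$ acts on $Q$ only through its restricted symbol $\B[\xi]|_{\ker\A}$; moreover $\Pi_{\B}Q = \mathscr F^{-1}(\Pi_{\ker(\B[\xi]|_{\ker\A})}\widehat Q(\xi))$ because on the subspace $\ker\A$ the kernel of $\B[\xi]$ coincides with the kernel of its restriction.

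Now I would apply Lemma~\ref{lem:classCR} not to $\B$ itself but to the differential operator $\widetilde{\B}$ obtained by pre-composing $\B$ with the inclusion $\iota\colon\ker\A\hookrightarrow V$, i.e. $\widetilde\B\coloneqq\B\circ\iota$ acting from $\ker\A$ to $W$, a $k$-th order homogeneous constant-coefficient operator with symbol $\widetilde\B[\xi]=\B[\xi]|_{\ker\A}$. Condition~\eqref{item:condition} says precisely that $\widetilde\B$ has constant rank, so Lemma~\ref{lem:classCR} gives
\begin{align*}
\norm{Q - \Pi_{\widetilde\B}Q}_{\lebe^q(\R^n)}\le c\,\norm{\widetilde\B Q}_{\dot\sobo^{-k,q}(\R^n)}
\end{align*}
for all $Q\in\hold_c^\infty(\R^n;\ker\A)$ — and a density/regularization remark handles the fact that $Q=\Pi_{\ker\A}P$ need not be compactly supported, by the same mollification and cutoff argument standard in this setting, or by working directly with the multiplier estimates on the homogeneous space. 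Then $\widetilde\B Q = \B Q = \B\Pi_{\ker\A}P$, and since $\Pi_{\ker\A}$ is a bounded Fourier multiplier of every order, $\norm{\B\Pi_{\ker\A}P}_{\dot\sobo^{-k,q}}\le\norm{\B P}_{\dot\sobo^{-k,q}} + \norm{\B(P-\Pi_{\ker\A}P)}_{\dot\sobo^{-k,q}}\le\norm{\B P}_{\dot\sobo^{-k,q}} + c\,\norm{P-\Pi_{\ker\A}P}_{\lebe^q}\le\norm{\B P}_{\dot\sobo^{-k,q}} + c\,\norm{\A[P]}_{\lebe^q}$, using the first step again. Combining the three estimates and noting $\Pi_{\widetilde\B}Q = \Pi_{\B}\Pi_{\ker\A}P$ yields \eqref{eq:wishLq}, proving (b)$\Rightarrow$(a).

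For the converse (a)$\Rightarrow$(b), I would argue by contraposition: if $\widetilde\B[\xi]=\B[\xi]|_{\ker\A}$ fails to have constant rank, then by the necessity half of Lemma~\ref{lem:classCR} (the part attributed to \cite{GR}) the estimate $\norm{Q-\Pi_{\widetilde\B}Q}_{\lebe^q}\le c\,\norm{\widetilde\B Q}_{\dot\sobo^{-k,q}}$ fails; one then lifts a near-extremal sequence $Q_j\in\hold_c^\infty(\R^n;\ker\A)$ to test fields $P_j\coloneqq Q_j$ (viewed in $V$), for which $\A[P_j]=0$, $\B P_j = \widetilde\B Q_j$, and $P_j - \Pi_{\B}\Pi_{\ker\A}P_j = Q_j - \Pi_{\widetilde\B}Q_j$, contradicting \eqref{eq:wishLq}. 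The main obstacle I anticipate is the bookkeeping around $\Pi_{\B}$ versus $\Pi_{\widetilde\B}$ and the domain issue — verifying carefully that $\ker(\B[\xi])\cap\ker\A = \ker(\B[\xi]|_{\ker\A})$ so that the two projections genuinely agree on the range of $\Pi_{\ker\A}$, and justifying that the negative-norm estimate of Lemma~\ref{lem:classCR} may be applied to the non-compactly-supported field $\Pi_{\ker\A}P$ — rather than any deep new estimate, since all the hard harmonic analysis is already packaged in Lemma~\ref{lem:classCR} and Mikhlin's theorem.
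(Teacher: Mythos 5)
Your strategy is the same as the paper's: decompose $P=\Pi_{\ker\A}P+\Pi_{(\ker\A)^\perp}P$, control the second piece by $\A[P]$, apply Lemma~\ref{lem:classCR} to the restricted operator $\widetilde\B=\B|_{\ker\A}$ with $E=\ker\A$ for the first piece, then use the triangle inequality plus boundedness of $\B:\lebe^q\to\dot\sobo^{-k,q}$; and for (a)$\Rightarrow$(b) you test with $\ker\A$-valued fields and invoke the necessity half of Lemma~\ref{lem:classCR}. That is exactly the paper's proof. However, your write-up contains two errors worth flagging.

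First, you misdiagnose the nature of $\Pi_{\ker\A}$. It is not a Fourier multiplier in any nontrivial sense: $\A$ is a linear map between finite-dimensional spaces, not a differential operator, so $\Pi_{\ker\A}$ is a fixed orthogonal projection of $V$ applied pointwise in $x$. Consequently $Q=\Pi_{\ker\A}P$ \emph{is} compactly supported whenever $P$ is, and the mollification/cutoff argument you invoke is unnecessary. Likewise, the bound $\norm{P-\Pi_{\ker\A}P}_{\lebe^q}\le c\norm{\A[P]}_{\lebe^q}$ is a purely pointwise linear-algebra estimate (injectivity of $\A|_{(\ker\A)^\perp}$), not an application of Mikhlin's theorem. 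The paper states this as a pointwise inequality \eqref{eq:goodguy}, which is the cleaner route.

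Second, and more substantively, your claim that $\Pi_{\B}Q=\mathscr F^{-1}\bigl(\Pi_{\ker(\B[\xi]|_{\ker\A})}\widehat Q(\xi)\bigr)$ ``because on $\ker\A$ the kernel of $\B[\xi]$ coincides with the kernel of its restriction'' does not hold as you've reasoned it. It is true that $\ker(\B[\xi]|_{\ker\A})=\ker\B[\xi]\cap\ker\A$ as sets, but the orthogonal projection of $V$ onto $\ker\B[\xi]$ does not in general restrict to the orthogonal projection of $\ker\A$ onto $\ker\B[\xi]\cap\ker\A$: for $v\in\ker\A$, $\Pi_{\ker\B[\xi]}v$ need not even lie in $\ker\A$. (Take $\ker\B[\xi]=\operatorname{span}(e_1)$, $\ker\A=\operatorname{span}(e_1+e_2)$, $v=e_1+e_2$: then $\Pi_{\ker\B[\xi]}v=e_1\notin\ker\A$, while $\Pi_{\ker\B[\xi]\cap\ker\A}v=0$.) The estimate you get from Lemma~\ref{lem:classCR} applied to $\widetilde\B$ on $\ker\A$-valued fields involves $\Pi_{\widetilde\B}$, the projection onto $\ker\B[\xi]\cap\ker\A$ \emph{within} $\ker\A$; this is what $\Pi_{\B}\Pi_{\ker\A}$ denotes in the statement of Lemma~\ref{lem:negSob}. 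So the correct resolution of the ``bookkeeping obstacle'' you flag is not to prove the two projections agree (they do not), but to observe that the $\Pi_{\B}$ appearing in the lemma is, by construction, already $\Pi_{\widetilde\B}$. With that reading fixed, your proof goes through and matches the paper's.
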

	
    \begin{proof} ``\ref{item:condition} $\Rightarrow$ \ref{item:auxineq}'':  To establish the sufficiency of the constant rank condition, we start with the pointwise decomposition $P=\Pi_{\ker\A}P+\Pi_{(\ker\A)^\perp}P$. Note first that $\A\circ\Pi_{(\ker\A)^\perp}$ is injective, otherwise we would find $u,v\in(\ker\A)^\perp$ such that $\A u =\A v$, meaning that $u-v\in\ker\A$. Hence, there exists a constant $c=c(\A)>0$ such that we can even estimate pointwise:
		\begin{align}\label{eq:goodguy}
			\abs{\Pi_{(\ker\A)^\perp}P}\le c\, \abs{\A[\Pi_{(\ker\A)^\perp}P]} = c\, \abs{\A[P]}.
		\end{align}
		For the remaining part we apply Lemma \ref{lem:classCR} with $E=\ker\A$, $u=\Pi_{\ker A}P$ and the $k$-th order differential operator $\B$:
		\begin{align}
			\norm{\Pi_{\ker A}P-\Pi_\B\Pi_{\ker A}P}_{\lebe^q(\R^n)} & \le c\norm{\B\Pi_{\ker A}P}_{\dot{\sobo}^{-k,q}(\R^n)}\notag\\
			&\le c \norm{\B P}_{\dot{\sobo}^{-k,q}(\R^n)} + c\norm{\B\Pi_{(\ker\A)^\perp}P}_{\dot{\sobo}^{-k,q}(\R^n)}\notag\\
			& \overset{\mathclap{\B\ k\text{th ord.}}}{\le}\quad  c \norm{\B P}_{\dot{\sobo}^{-k,q}(\R^n)} + c\norm{\Pi_{(\ker\A)^\perp}P}_{\lebe^{q}(\R^n)}\notag\\
			&\overset{\mathclap{\eqref{eq:goodguy}}}{\le} c \norm{\B P}_{\dot{\sobo}^{-k,q}(\R^n)} + c\norm{\A[P]}_{\lebe^{q}(\R^n)} \label{eq:badguy}
		\end{align}
		and a combination of \eqref{eq:goodguy} and \eqref{eq:badguy} gives the desired inequality \eqref{eq:wishLq}.
		
		For the converse implication ``\ref{item:auxineq} $\Rightarrow$ \ref{item:condition}'' test \eqref{eq:wishLq} with functions $P\in\hold^\infty_c(\R^n;\ker\A)$ and obtain
		\begin{align*}
			\norm{P-\Pi_{\B}P}_{\lebe^q(\R^n)}\le c \norm{\B P}_{\dot{\sobo}^{-k,q}(\R^n)}
		\end{align*}
		so that the necessity of the constant rank condition is a consequence of Lemma \ref{lem:classCR}. 
	\end{proof}
	
	Now we can complete the 
	
	\begin{proof}[Proof of Theorem \ref{thm:main}.] Ad \ref{item:p>1} Let $1<p<n$. For $q=p^*=\frac{np}{n-p}$ we obtain from Lemma \ref{lem:negSob}:
		\begin{align*}
			\norm{P-\Pi_{\B}\Pi_{\ker\A}P}_{\dot{\sobo}^{k-1,p^*}(\R^n)} &\le c\, (\norm{\A[P]}_{\dot{\sobo}^{k-1,p^*}(\R^n)}+\norm{\B P}_{\dot{\sobo}^{-1,p^*}(\R^n)}) \\
			& \le c\, (\norm{\A[P]}_{\dot{\sobo}^{k-1,p^*}(\R^n)}+\norm{\B P}_{\lebe^p(\R^n)}),
		\end{align*}
		by classical Sobolev embedding $\lebe^p(\R^n)\hookrightarrow \dot{\sobo}^{-1,p^*}(\R^n)$.\medskip
		
		Ad \ref{item:p=1} Note, that the last step fails for $p=1$. However, we can proceed like in \cite{GLVS}. The crucial idea is, that constant rank operators allow for a (ellipticity) complex, cf. \cite[Thm 1]{Raita}. Thus, since $\B:\hold^\infty_c(\R^n;\ker\A)\to\hold^\infty_c(\R^n;W)$ has constant rank we find a linear homogeneous constant coefficients differential operator $\mathbb{L}$ of order $\ell$ such that
		\begin{align}\label{eq:complex}
			\ker \mathbb{L}[\xi]=\B[\xi](\ker\A) \qquad \forall\ \xi\in\R^n\backslash\{0\}.
		\end{align}
		By the reduced cancellation of $\B$ relative to $\A$ we deduce the cocancellation of $\mathbb{L}$:
		\begin{align*}
			\bigcap_{\xi\in\R^n\backslash\{0\}} \ker \mathbb{L}[\xi]=\bigcap_{\xi\in\R^n\backslash\{0\}}\B[\xi](\ker\A)=\{0\}.
		\end{align*}
		Hence, we can apply Van Schaftingen's strong Bourgain-Brezis estimate for cocancelling differential operators \cite[Thm 9.2]{VS}:
		\begin{align*}
			\norm{f}_{\dot{\sobo}^{-1,1^*}(\R^n)}\le c\,(\norm{\mathbb{L}f}_{\dot{\sobo}^{-1-\ell,1^*}(\R^n)}+\norm{f}_{\lebe^1(\R^n)}) \qquad \forall\ f\in\hold^\infty_c(\R^n;W)
		\end{align*}
		with the particular choice $f=\B P$:
		\begin{align*}
			\norm{\B P}_{\dot{\sobo}^{-1,1^*}(\R^n)}&\le c\,(\norm{\mathbb{L}\B P}_{\dot{\sobo}^{-1-\ell,1^*}(\R^n)}+\norm{\B P}_{\lebe^1(\R^n)})\\
			& \le c\,(\lVert\underbrace{\mathbb{L}\B \Pi_{\ker\A}P}_{=0} + \mathbb{L}\B \Pi_{(\ker\A)^{\perp}}P\rVert_{\dot{\sobo}^{-1-\ell,1^*}(\R^n)}+\norm{\B P}_{\lebe^1(\R^n)})\\
			&\overset{\eqref{eq:complex}}{\le} c\,(\norm{\mathbb{L}\B \Pi_{(\ker\A)^{\perp}}P}_{\dot{\sobo}^{-1-\ell,1^*}(\R^n)}+\norm{\B P}_{\lebe^1(\R^n)})\\
			&\overset{\mathclap{\mathbb{L}\ \ell\text{th ord.}}}{\le}\quad c\,(\norm{\B \Pi_{(\ker\A)^{\perp}}P}_{\dot{\sobo}^{-1,1^*}(\R^n)}+\norm{\B P}_{\lebe^1(\R^n)})\\
			&\overset{\mathclap{\B\ k\text{th ord.}}}{\le}\quad c\,(\norm{\Pi_{(\ker\A)^{\perp}}P}_{\dot{\sobo}^{k-1,1^*}(\R^n)}+\norm{\B P}_{\lebe^1(\R^n)})\\
			&\overset{\eqref{eq:goodguy}}{\le} c\,(\norm{\A[P]}_{\dot{\sobo}^{k-1,1^*}(\R^n)}+\norm{\B P}_{\lebe^1(\R^n)})
		\end{align*}
		which allows us to conclude like in the proof of \ref{item:p>1}. 
	\end{proof}
	
	\begin{remark}
		Note that if $\ker\A\subseteq\ker\B[\xi]$ then $\Pi_{\B}\Pi_{\ker \A}=\Pi_{\ker\A}$ and the KMS inequality with the presence of reduced constant rank property reads:
		\begin{align*}
			\norm{\Pi_{(\ker\A)^{\perp}}[P]}_{\dot{\sobo}^{k-1,p^*}(\R^n)}\le c\,(\norm{\A[P]}_{\dot{\sobo}^{k-1,p^*}(\R^n)}+\norm{\B P}_{\lebe^p(\R^n)})
		\end{align*}
		which is obvious in view of \eqref{eq:goodguy}. Indeed, such a case appears, e.g., for the combination  $(\A,\B,n)=(\dev,\sym\Curl,3)$ where $\Pi_{\ker\A}P=\frac{\tr P}{3}\mathbbm{1}_3$ and $\dev P = P -\frac{\tr P}{3}\mathbbm{1}_3=\Pi_{(\ker\A)^\perp}P$.
		
	\end{remark}
	
	\section{Necessity}
	In terms of applications, the sufficiency of our results is already a valuable tool to check the validity of the desired coercivity estimates. However, for a sharp result, the necessity is also obligatory. Thus, in Lemma \ref{lem:negSob} we establish the equivalence between the reduced constant rank condition and the validity of an $\lebe^q$-estimate involving negative Sobolev norms. This suggests that the reduced constant rank condition in Theorem \ref{thm:main} is not only sufficient but also, at least in related settings, necessary. In particular, this would rely on the following classification:
	\begin{align}   
	 \norm{u-\Pi_{\mathbb{A}}u}_{\dot{\sobo}^{k-1,p^*}(\R^n)}\le c \norm{\mathbb{A} u}_{\lebe^p(\R^n)} \quad \Leftrightarrow\quad \mathbb{A} &\text{ has constant rank} \label{eq:DESIRE} \\ &\text{ and, in case $p=1$, is, in addition, cancelling.}\notag
	\end{align}
The sufficiency here follows from Lemma \ref{lem:classCR} and the Sobolev embedding in case $p\in(1,n)$ and, in the borderline case $p=1$, from the limiting Sobolev inequality given by Van Schaftingen \cite{VS}:

\begin{itemize}
 \item If $p\in(1,n)$ we have by Lemma \ref{lem:classCR} for a constant rank operator $\mathbb{A}$:
 \[
  \norm{u-\Pi_{\mathbb{A}}u}_{\dot{\sobo}^{k-1,p^*}(\R^n)}\le c \norm{\mathbb{A} u}_{\dot{\sobo}^{-1,p^*}(\R^n)} \le c\norm{\mathbb{A} u}_{\lebe^p(\R^n)} .
 \]
\item If $p=1$, we also start by Lemma \ref{lem:classCR}, but we cannot generally estimate the $\dot{\sobo}^{-1,1^*}$-norm by the $\lebe^1$-norm. However, this last step can be performed if $\mathbb{A}$ is cancelling. Indeed, it follows from \cite[Thm 1.4]{VS} that for a cancelling operator $\mathbb{A}$ it holds
\[
  \norm{\mathbb{A} u}_{\dot{\sobo}^{-1,1^*}(\R^n)} \le c\norm{\mathbb{A} u}_{\lebe^1(\R^n)},
\]
which then completes the sufficiency in \eqref{eq:DESIRE} also in the borderline case $p=1$.
\end{itemize}
However, the necessity of the constant rank condition in \eqref{eq:DESIRE} remains an open question and will be addressed elsewhere. For a comparable discussion involving elliptic differential operators, we refer the interested reader to \cite[Sec. 5]{VS}.

	{\footnotesize
		\subsection*{Conflict of interest} The authors declare that they have no conflict of interest.
		\vspace{-2.5ex}
		\subsection*{Data availability statement} Data sharing not applicable to this article as no datasets were generated or analyzed.
	}

\end{document}